\theoremstyle{cupthm}
\newtheorem{Theorem}{Theorem}[section]
\newtheorem{Proposition}[Theorem]{Proposition}
\newtheorem{Lemma}[Theorem]{Lemma}
\theoremstyle{cupdefn}
\newtheorem{Definition}[Theorem]{Definition}
\theoremstyle{cuprem}
\newtheorem{Remark}[Theorem]{Remark}
\numberwithin{equation}{section}
\begin{document}
\runningtitle{An extension of the Cartwright-McMullen theorem in FC for the smooth Stieltjes case}
\title{An extension of the Cartwright-McMullen theorem in fractional calculus for the smooth Stieltjes case}
%% If there is more than one author, put \cauthor immediately before
%% the corresponding author.
%\cauthor %% mark the next author as corresponding author
\author[1]{Daniel Cao Labora}
\address[1]{Departamento de An\'alise Matem\'atica, Estat\'istica e Optimizaci\'on,
Facultade de Matem\'aticas, Universidade de Santiago de Compostela; R\'ua Lope G\'omez de Marzoa, 6, ZIP: 15782, Spain;\email{daniel.cao@usc.gal} \\
ORCID: \url{https://orcid.org/0000-0003-2266-2075}}
%% If there are several authors, list them here
%\author[2]{Second author}
%\address[2]{Second address\email{a@net.com}}

%% List the authors, initials and surnames only, for the
%% running head (left hand page)
\authorheadline{D. Cao Labora}

%% If there is a dedication, include it here
\dedication{Dedicated to the former work of Donald Cartwright and John R. McMullen, from University of Sydney, that settled a fundamental result in the discipline of fractional calculus in 1978.}

\support{The work of the author has been partially supported by the Agencia Estatal de Investigación (AEI) of Spain under Grant PID2020-113275GB-I00, cofinanced by the European Community fund FEDER, as well as Xunta de Galicia grant ED431C
2019/02 for Competitive Reference Research Groups (2019–22).}

\begin{abstract}
In 1976, Donald Cartwright and John McMullen characterized axiomatically the Riemann-Liouvile fractional integral in their paper \cite{CarMc}, that was published in 1978. The motivation for their work was to answer affirmatively to a conjecture stated by J. S. Lew a few years before, in 1972. Essentially, their ``Cartwright-McMullen theorem in fractional calculus'' proved that the Riemann-Liouville fractional integral is the only continuous extension of the usual integral operator to positive real orders, in such a way that the Index Law holds. In this paper, we propose an analogous result for the uniqueness of the extension of the Stieltjes integral operator, in the case of a smooth integrator.
\end{abstract}

%% - subject classification and keywords
%% 2010 American Mathematical Society Subject Classification
%% Provide only ONE primary classification
\classification{26A33; 26A42}
%% Four or five keywords or phrases
\keywords{Stieltjes integral, fractional calculus; axiomatic characterization}

\maketitle

\section{Basic concepts}

Initially, we introduce the fundamental tools that are required to develop this work. Most concretely, we give some brief notions concerning the Riemann-Liouville fractional integral and classical results about convolutions. During the rest of the document, we will assume that $a,b \in \mathbb{R}$ are real numbers and $\alpha, \beta \in \mathbb{R}^+$ are strictly positive real numbers. It is important to take into account that we will be dealing most of the time with the Banach space $L^p[a,b]$ for $p \in [1,\infty]$ and, hence, all the functional identities that we describe will hold in the whole interval $[a,b]$ except for, at most, a zero Lebesgue measure set. We will use the notation $\mathcal{B}\left(L^p[a,b]\right)$ for the space of linear continuous operators from $L^p[a,b]$ to itself. Finally, we highlight that all the deductions are obviously still valid if we replace $L^p[a,b]$ by $\mathcal{C}([a,b])$.

\begin{Definition} \label{Lio} Given $\alpha \in \mathbb{R}^+$ and $a \in \mathbb{R}$ we define the Riemann-Liouville fractional integral of order $\alpha$ and base point $a$ of a function $f \in L^p[a,b]$ as $$\left(I_{a^+}^{\alpha}f\right)(t):=\int_a^t \frac{(t-s)^{\alpha-1}}{\Gamma(\alpha)}\,f(s) \, ds = \int_a^t \frac{(s-a)^{\alpha-1}}{\Gamma(\alpha)}\,f(t-s+a) \, ds.$$
\end{Definition}

The exact expression for $I_{a^+}^{\alpha}$ is not extremely relevant for our purposes. Of course, we observe that for $\alpha=1$ we recover the classical integral operator with base point $a$. The important fact that we need to take into account is the following proposition, which is a compilation of well-known results that can be found in \cite{Samko}.

\begin{Proposition} \label{Prop} The fractional integral operator $I_{a^+}^{\alpha}$ satisfies several properties:
\begin{itemize}
\item The map $I_{a^+}^{\alpha}: L^p[a,b] \longrightarrow L^p[a,b]$ is well-defined.
\item $I_{a^+}^1$ is the usual integral operator with base point $a$.
\item We have the Index Law $I_{a^+}^{\alpha+\beta} = I_{a^+}^{\alpha} \circ I_{a^+}^{\beta}$ for any $\alpha, \beta>0$.
\item The map $\mathbb{R}^+ \to \mathcal{B}\left(L^p[a,b]\right)$ given by $\alpha \to I_{a^+}^{\alpha}$ is continuous.
\end{itemize}
\end{Proposition}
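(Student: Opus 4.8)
The plan is to verify each of the four bulleted properties in Proposition~\ref{Prop} in turn, since they are a compilation of standard facts. For the first item, the well-definedness of $I_{a^+}^\alpha : L^p[a,b] \to L^p[a,b]$, I would recognize $I_{a^+}^\alpha f$ as a convolution of $f$ with the locally integrable kernel $k_\alpha(t) = t^{\alpha-1}/\Gamma(\alpha)$ on the finite interval. Since $k_\alpha \in L^1[0, b-a]$ for every $\alpha > 0$ (the singularity $t^{\alpha-1}$ at the origin is integrable precisely because $\alpha > 0$), I would invoke Young's convolution inequality to conclude that convolution with an $L^1$ kernel maps $L^p$ to $L^p$ continuously, with operator norm bounded by $\|k_\alpha\|_{L^1[0,b-a]}$. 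The second item is an immediate substitution: setting $\alpha = 1$ gives $k_1(t) = t^0/\Gamma(1) = 1$, so $(I_{a^+}^1 f)(t) = \int_a^t f(s)\,ds$, the ordinary integral with base point $a$.

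For the third item, the Index Law, I would compute the composition $I_{a^+}^\alpha \circ I_{a^+}^\beta$ directly. Writing both operators as convolutions against $k_\alpha$ and $k_\beta$ respectively, the associativity of convolution reduces the claim to the kernel identity $k_\alpha * k_\beta = k_{\alpha+\beta}$. This is the classical Beta-function computation: expanding the convolution $\int_0^t k_\alpha(t-s) k_\beta(s)\,ds$ and substituting $s = tu$ yields
\begin{equation*}
(k_\alpha * k_\beta)(t) = \frac{t^{\alpha+\beta-1}}{\Gamma(\alpha)\Gamma(\beta)} \int_0^1 (1-u)^{\alpha-1} u^{\beta-1}\,du = \frac{t^{\alpha+\beta-1}}{\Gamma(\alpha)\Gamma(\beta)} B(\beta,\alpha),
\end{equation*}
and the identity $B(\beta,\alpha) = \Gamma(\alpha)\Gamma(\beta)/\Gamma(\alpha+\beta)$ collapses this to $k_{\alpha+\beta}(t)$. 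A small technical point is that I should justify interchanging the order of integration (Fubini--Tonelli), which is legitimate since the integrand is nonnegative when $f$ is, and the general case follows by splitting into positive and negative parts or by a density argument.

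For the fourth item, the continuity of $\alpha \mapsto I_{a^+}^\alpha$ as a map $\mathbb{R}^+ \to \mathcal{B}(L^p[a,b])$, I would estimate the operator norm of the difference $I_{a^+}^\alpha - I_{a^+}^{\alpha'}$ by the $L^1$ norm of the kernel difference $\|k_\alpha - k_{\alpha'}\|_{L^1[0,b-a]}$, again via Young's inequality. It then suffices to show that $\alpha \mapsto k_\alpha$ is continuous from $\mathbb{R}^+$ into $L^1[0,b-a]$. This reduces to a dominated-convergence argument: for $\alpha$ ranging over a compact subinterval $[\alpha_0, \alpha_1] \subset \mathbb{R}^+$, the family $t^{\alpha-1}$ admits an integrable dominating function (one bounds $t^{\alpha-1}$ by $\max\{t^{\alpha_0-1}, t^{\alpha_1-1}\}$, both integrable on $[0,b-a]$), and $\Gamma$ is continuous and nonvanishing on $\mathbb{R}^+$.

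I expect the main obstacle to be the continuity claim in the fourth item near the boundary $\alpha \to 0^+$, where $\|k_\alpha\|_{L^1}$ need not stay bounded and the dominating function degenerates. The resolution is to observe that continuity is only asserted on the open half-line $\mathbb{R}^+$, so it suffices to prove continuity at each fixed $\alpha > 0$; localizing to a compact neighborhood $[\alpha_0,\alpha_1]$ with $\alpha_0 > 0$ sidesteps the singularity entirely and makes the dominated-convergence estimate uniform, which is exactly what the operator-norm continuity requires.
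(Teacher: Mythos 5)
Your proof is correct and complete; the paper itself offers no proof of Proposition~\ref{Prop}, simply deferring to the standard reference \cite{Samko}, and your arguments (Young's inequality for the integrable kernel $g_{a,\alpha}$ of Equation~(\ref{Eq}), the Beta-function identity $g_{a,\alpha}*g_{a,\beta}=g_{a,\alpha+\beta}$ yielding the Index Law via associativity of convolution, and dominated convergence on compact subintervals of $\mathbb{R}^+$ for the norm-continuity of $\alpha \mapsto I_{a^+}^{\alpha}$) are precisely the standard ones and fit the convolution framework $I_{a^+}^{\alpha}=C_a(g_{a,\alpha})$ that the paper sets up. Your closing observation that one must localize to $[\alpha_0,\alpha_1]$ with $\alpha_0>0$ to obtain an integrable dominating function correctly identifies and resolves the only delicate point.
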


We are also interested in the ``Riemann-Liouville fractional integral with respect to a function $h$'', which is also defined at \cite{Samko}. Analogously to what is done at \cite{Samko}, in the rest of the document we will assume that $h \in \mathcal{C}^1[a,b]$ with $h'(t) \neq 0$ for any $t \in [a,b]$. Besides, to consider only the case of a left base point of integration, we make the assumption $h'(t)>0$. In other case $h'(t)<0$, and the same result can be obtained after considering a fractional integral with right base point.

\begin{Definition} Given $\alpha \in \mathbb{R}^+$ and $a \in \mathbb{R}$ we define the Riemann-Liouville fractional integral of order $\alpha$ and base point $a$ of a function $f \in L^p[a,b]$ with respect to $h$ as $$\left(I_{h, \, a^+}^{\alpha}f\right)(t):=\int_a^t \frac{(h(t)-h(s))^{\alpha-1}}{\Gamma(\alpha)}\,f(s) \, h'(s)\, ds.$$
\end{Definition}

There are several properties of this operator that are derived in \cite{Samko}. We will only highlight that, for $\alpha =1$, the operator $I_{h, \, a^+}^{1}$ is the Stieltjes integral operator with integrator $h$ and that $I_{h, \, a^+}^{\alpha} \in \mathcal{B}\left(L^p[a,b]\right)$.

With respect to convolutions, we recall that for each function $g \in L^1[a,b]$ we can induce, in a continuous way, a convolution operator after defining a map $C_a:L^1[a,b] \longrightarrow \mathcal{B}(L^p[a,b])$. More specifically, for each $g \in L^1[a,b]$ its associated convolution operator $C_a(g):L^p[a,b] \longrightarrow L^p[a,b]$ is defined as $$(C_a(g)\,f)(t):= (g * f)(t):=\int_{a}^t g(t-s+a) \cdot f(s) \, ds.$$ Under the previous notation, we say that $g$ is the kernel of the convolution operator $C_a(g)$. There are some well-known properties about convolution operators that we summarize below.

Essentially, the convolution operation $*$ is commutative and associative. Moreover, the linear operator $C_a$ is continuous and well-defined, in the sense that $C_a(g)$ is a bounded endomorphism in $L^p[a,b]$ for any $g \in L^1[a,b]$ that changes continuously with respect to the operator norm when moving $g$. Moreover, we will use the following Theorem, due to Titchmarsh in \cite{Tit}, that roughly states that convolution operators are injective, provided that the kernel is different from the zero function on any right neighbourhood of the base point $a$.

\begin{Theorem}[Titchmarsh]
Suppose that $f,g \in L^1[a,b]$ are such that $f*g \equiv 0$. Then, there exist $\lambda,\mu \in \mathbb{R}^+$ such that the following three conditions hold:
\begin{itemize}
\item $f \equiv 0$ (almost everywhere) in the interval $[a,a+\lambda]$,
\item $g \equiv 0$ (almost everywhere) in the interval $[a,a+\mu]$,
\item $\lambda+\mu \geq b-a$.
\end{itemize}
\end{Theorem}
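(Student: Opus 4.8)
The plan is to reduce the statement to an \emph{additivity property of the left endpoint of the support under convolution}, and then to establish that property by encoding the support information in the growth of an entire function through the Laplace transform. First I would normalise $a=0$ and write $T=b-a$, extending $f,g$ by zero so that they become $L^1$ functions supported in $[0,T]$. For $\phi\in L^1$ supported in $[0,\infty)$, write $\ell(\phi)$ for the essential left endpoint of its support, i.e. the largest number such that $\phi=0$ almost everywhere on $[0,\ell(\phi)]$, and set $\lambda=\ell(f)$, $\mu=\ell(g)$. The degenerate cases ($f\equiv 0$ or $g\equiv 0$ on $[0,T]$) are immediate, taking $\lambda=T,\mu=0$ or the reverse, so I may assume $\lambda,\mu\in[0,T)$; the whole claim then amounts to proving $\lambda+\mu\geq T$. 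Reformulating contrapositively, after translating $f,g$ so that they start exactly at the origin, it suffices to show that two functions with genuine left endpoint $0$ (failing to vanish on every $[0,\varepsilon]$) have a convolution that also fails to vanish on every $[0,\varepsilon]$; equivalently, that $\ell(f*g)=\lambda+\mu$.

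The engine is the bilateral Laplace transform. Since $f$ is supported in $[0,T]$, the function $F(p)=\int_0^T f(t)e^{-pt}\,dt$ is entire of exponential type, and likewise $G$; the convolution theorem gives $\widehat{f*g}=F\cdot G$. The key dictionary, furnished by the Paley--Wiener theorem together with P\'olya's description of the indicator diagram, translates the essential left endpoint of the support into the growth of the transform in a prescribed direction: one has $\limsup_{x\to+\infty}x^{-1}\log|F(x)|=-\lambda$, and more generally the indicator $h_F(\theta)=\limsup_{r\to\infty}r^{-1}\log|F(re^{i\theta})|$ is the support function of the segment $[\lambda,r_f]\subset\mathbb{R}$, where $r_f$ is the right endpoint of $\operatorname{supp}f$. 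I would first prove the one-sided ``edge lemma'' that a genuine left endpoint $\lambda$ forbids $F$ from decaying faster than $e^{-\lambda x}$, so that the support endpoint can be read off from the transform.

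The crux, and the step I expect to be the main obstacle, is to show that these left endpoints \emph{add}, namely $\ell(f*g)=\lambda+\mu$. The inequality $\ell(f*g)\geq\lambda+\mu$ is immediate from the defining integral. The reverse inequality is the genuine heart of Titchmarsh's theorem, and it cannot be obtained by examining growth along the real axis alone: the $\limsup$ of a product of two transforms need not equal the sum of the individual $\limsup$'s, since the exponentially small ``dips'' of $F$ and $G$ might fail to align. The honest route is to invoke the theory of entire functions of exponential type: the transforms $F$ and $G$ have completely regular growth with indicator equal to a support function, so the product $F\cdot G$ has indicator $h_F+h_G$, and the Phragm\'en--Lindel\"of principle across sectors upgrades the real-axis information to true additivity of the supporting functions. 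Reading off the left endpoint then yields $\ell(f*g)=\lambda+\mu$.

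Finally, I would close by contradiction. If we had $\lambda+\mu<T$, the additivity just established would force $f*g\not\equiv 0$ on the nonempty interval $[\lambda+\mu,\,T]\subset[0,T]$, contradicting the hypothesis $f*g\equiv 0$ on $[a,b]$. Hence $\lambda+\mu\geq T=b-a$, and since by construction $f=0$ almost everywhere on $[a,a+\lambda]$ and $g=0$ almost everywhere on $[a,a+\mu]$, these $\lambda$ and $\mu$ are exactly the constants required by the statement.
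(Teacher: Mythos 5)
The paper does not prove this statement: it imports it verbatim from Titchmarsh's 1926 article as a black box, so there is no internal proof to compare yours against, and your attempt has to be judged on its own terms. What you have written is a faithful outline of one of the standard complex-analytic proofs of the Titchmarsh convolution theorem. The reduction is handled correctly --- in particular you use only that $f*g$ vanishes a.e.\ on $[0,T]$ rather than on all of $[0,2T]$, deducing $\ell(f*g)\geq T$ and combining it with the additivity $\ell(f*g)=\ell(f)+\ell(g)$ --- and you correctly locate the crux: the inequality $\ell(f*g)\leq\ell(f)+\ell(g)$ cannot be read off from decay of $F\cdot G$ along a single ray, since the $\limsup$ of a product need not be the sum of the individual $\limsup$'s. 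What keeps this a plan rather than a proof is that both load-bearing steps are invoked rather than established: (i) the dictionary identifying the indicator diagram of $F(p)=\int_0^T f(t)e^{-pt}\,dt$ with the convex hull of $\mathrm{supp}\,f$ (P\'olya/Paley--Wiener, proved via Phragm\'en--Lindel\"of), and (ii) the fact that such transforms lie in the Cartwright class and hence have completely regular growth in the sense of Levin--Pfluger, which is what forces $h_{FG}=h_F+h_G$. Step (ii) is a theorem of essentially the same depth as Titchmarsh's theorem itself, so your argument stands or falls with whether you are permitted to cite it; if you are, the proof closes correctly. One small but genuine point in your favour: in the degenerate case $f\equiv 0$, $g\equiv 1$ one is forced to take $\mu=0$, whereas the paper asks for $\lambda,\mu\in\mathbb{R}^+$ and elsewhere declares $\mathbb{R}^+$ to mean strictly positive; the defect here is in the paper's phrasing of the statement, not in your proof, and your choice $\mu=0$ is the honest one.
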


As we already mentioned, the previous result implies that $C_a(g)$ is injective, provided that $g \in L^1[a,b]$ is a non-identically null function at any right neighbourhood of $a$.

In our case, the interest of convolutions and their connection with Riemann-Liouville fractional integral comes from the identity $C_a \left(g_{a, \,\alpha}\right)=I_{a^+}^{\alpha}$ for $\alpha>0$, where we have defined
\begin{equation} \label{Eq}
g_{a,\, \alpha}(t):=\frac{(t-a)^{\alpha-1}}{\Gamma(\alpha)} \in L^1[a,b].
\end{equation}
Thus, many typical tools concerning convolution theory can be applied to study the so-called ``fractional operators''. Finally, and due to technical reasons, we will require also the following result.

\begin{Lemma} \label{Lema}
Consider three Banach spaces $X,Y,Z$ and denote by $\mathcal{B}(X,Y)$ the space of continuous linear maps from $X$ to $Y$ with the norm topology. Then, the operator $\textnormal{Comp}: \mathcal{B}(X,Y) \times \mathcal{B}(Y,Z) \longrightarrow \mathcal{B}(X,Z)$ given by $\textnormal{Comp}(g, f)= f \circ g$ is continuous.
\end{Lemma}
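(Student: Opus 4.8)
The plan is to exploit the submultiplicativity of the operator norm together with a standard ``add and subtract'' decomposition, reducing the continuity of the bilinear map $\textnormal{Comp}$ to two elementary estimates. Since $\mathcal{B}(X,Y) \times \mathcal{B}(Y,Z)$ is a metric space when endowed with the product metric, it suffices to establish sequential continuity: given sequences $g_n \to g$ in $\mathcal{B}(X,Y)$ and $f_n \to f$ in $\mathcal{B}(Y,Z)$, I would show that $f_n \circ g_n \to f \circ g$ in $\mathcal{B}(X,Z)$.

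First I would record the key inequality $\|f \circ g\|_{\mathcal{B}(X,Z)} \leq \|f\|_{\mathcal{B}(Y,Z)} \, \|g\|_{\mathcal{B}(X,Y)}$, valid for any composable pair, which is immediate from the definition of the operator norm together with the chain of estimates $\|f(g(x))\|_Z \leq \|f\| \, \|g(x)\|_Y \leq \|f\| \, \|g\| \, \|x\|_X$ holding for every $x \in X$. Next, I would use the algebraic identity
\[
f_n \circ g_n - f \circ g = (f_n - f) \circ g_n + f \circ (g_n - g),
\]
and apply the triangle inequality in $\mathcal{B}(X,Z)$ followed by the submultiplicative estimate to each summand, obtaining
\[
\|f_n \circ g_n - f \circ g\| \leq \|f_n - f\| \, \|g_n\| + \|f\| \, \|g_n - g\|.
\]

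Finally, I would conclude by taking limits. Since $g_n \to g$, the sequence of norms $\|g_n\|$ is bounded (indeed it converges to $\|g\|$), whereas $\|f_n - f\| \to 0$ and $\|g_n - g\| \to 0$ by hypothesis; hence the right-hand side tends to $0$. I do not anticipate any genuine obstacle, as the argument is entirely formal: the only point demanding a little care is to ensure that the coefficient $\|g_n\|$ stays controlled as $n \to \infty$, which is guaranteed by convergence, so that the vanishing of $\|f_n - f\|$ is not offset by an unbounded factor. An alternative and equally direct route would be to verify continuity at an arbitrary fixed point $(g_0, f_0)$, using the same decomposition with generic nearby operators $g, f$ in place of $g_n, f_n$; this avoids any appeal to the sequential characterization of continuity and works verbatim in the general normed setting.
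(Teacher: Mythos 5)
Your proof is correct and follows essentially the same route as the paper: an add-and-subtract decomposition of $f_n \circ g_n - f \circ g$, the triangle inequality, submultiplicativity of the operator norm, and the observation that the norm of the convergent factor stays bounded. The only cosmetic difference is which factor is held fixed in each summand of the decomposition, which changes nothing.
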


\begin{proof}
From the triangle inequality, we deduce $$\Vert \textnormal{Comp}(g_1,f_1)-\textnormal{Comp}(g_2,f_2)\Vert \leq \Vert f_1 \circ g_1 - f_1 \circ g_2 \Vert + \Vert f_1 \circ g_2 - f_2 \circ g_2 \Vert.$$ Moreover, the right hand side can be bounded from above by $$\Vert f_1 \Vert \cdot \Vert g_1 - g_2 \Vert + \Vert f_1 - f_2 \Vert \cdot \Vert g_2 \Vert.$$ Finally, we observe that if $(g_2,f_2)$ tends to $(g_1,f_1)$ the previous bound goes to zero, since $\Vert g_1 - g_2 \Vert$, $\Vert f_1 - f_2 \Vert$ go to zero and $\Vert g_2 \Vert$ tends to $\Vert g_1 \Vert$ due to the continuity of the norm.
\end{proof}

\begin{Remark} \label{Rema} From the previous result it is straightforward to check that, given a function $g \in \mathcal{B}(Y,Z)$, then the map $ \textnormal{Comp}_{(g,\cdot)}: \mathcal{B}(X,Y) \longrightarrow \mathcal{B}(X,Z)$ defined as $ \textnormal{Comp}_{(g,\cdot)}(f)=f \circ g$ is continuous. Analogously, given a function $f \in \mathcal{B}(X,Y)$, then the map $ \textnormal{Comp}_{(\cdot,f)}:  \mathcal{B}(Y,Z) \longrightarrow \mathcal{B}(X,Z)$ defined as $ \textnormal{Comp}_{(\cdot,f)}(g)=f \circ g$ is continuous.
\end{Remark}

\section{Introduction to the state of the art}

In 1978, Donald Cartwright and John McMullen provided a result that characterized axiomatically the Riemann-Liouvile fractional integral. In this sense, they proved that the Riemann-Liouville fractional integral was the unique definition that extended the integral operator $I_{a^+}^1$, assuming some reasonable hypotheses for the extension. Our goal is to provide a similar result for the Stieltjes integral operator with integrator $h$. As one can expect, the unique family that performs this extension will be the ``Riemann-Liouville fractional integral with respect to $h$''.

Now, we state the Cartwright and McMullen result for the case of real functions. Interested readers can find their original work in \cite{CarMc}, which differs from the following statement in the consideration of complex-valued functions. The consideration of real valued functions allows us to omit the third hypothesis in \cite{CarMc}, regarding the positivity of $J_{a^+}^{\alpha}$.

\begin{Theorem}[Cartwright-McMullen, real version] \label{TheoO}
Given a fixed $a \in \mathbb{R}$, there is only one family of operators $\left(J_{a^+}^{\alpha}\right)_{\alpha > 0}$ on $L^p[a,b]$ satisfying the following conditions:
\begin{enumerate}
\item The operator of order $1$ is the usual integral with base point $a$. That is, $J_{a^+}^1=I_{a^+}^1$. (Interpolation property)
\item The Index Law holds. That is, $J_{a^+}^{\alpha} \circ J_{a^+}^{\beta}=J_{a^+}^{\alpha+\beta}$ for all $\alpha,\beta > 0$. (Index Law)
\item The family is continuous with respect to the parameter. That is, the following map $\textnormal{Ind}_a:\mathbb{R}^+ \longrightarrow \mathcal{B} \left(L^p[a,b] \right)$ given by $\textnormal{Ind}_a(\alpha) = J_{a^+}^{\alpha}$ is continuous, where the codomain has the norm topology. (Continuity)
\end{enumerate}
This family is precisely given by the Riemann-Liouville fractional integrals and, hence, we have $J_{a^+}^{\alpha}=I_{a^+}^{\alpha}$.
\end{Theorem}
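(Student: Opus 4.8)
The plan is to take existence for granted---Proposition \ref{Prop} already shows that the Riemann--Liouville family $\left(I_{a^+}^{\alpha}\right)_{\alpha>0}$ satisfies the three conditions---and to concentrate entirely on uniqueness. So let $\left(J_{a^+}^{\alpha}\right)_{\alpha>0}$ be an arbitrary family satisfying (1)--(3), and abbreviate $V:=J_{a^+}^1=I_{a^+}^1$, the ordinary integral. The first observation is that the Index Law forces each $J_{a^+}^{\alpha}$ to commute with $V$, since $J_{a^+}^{\alpha}\circ V=J_{a^+}^{\alpha+1}=V\circ J_{a^+}^{\alpha}$; hence $J_{a^+}^{\alpha}$ commutes with every power $V^n=I_{a^+}^n$, and iterating the Index Law gives $J_{a^+}^n=V^n=I_{a^+}^n$ on the positive integers.

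Second, I would represent $J_{a^+}^{\alpha}$ as a convolution operator for $\alpha>1$. Writing $J_{a^+}^{\alpha}=V\circ J_{a^+}^{\alpha-1}$ and setting $g_{\alpha}:=J_{a^+}^{\alpha-1}\mathbf{1}\in L^p[a,b]\subseteq L^1[a,b]$, one checks that $J_{a^+}^{\alpha}$ and $C_a(g_{\alpha})$ agree on every monomial $V^n\mathbf{1}=(t-a)^n/n!$: by commutation $J_{a^+}^{\alpha}(V^n\mathbf{1})=V^{n}(Vg_{\alpha})=V^{n+1}g_{\alpha}$, while $C_a(g_{\alpha})(V^n\mathbf{1})=g_{\alpha}*g_{a,n+1}=I_{a^+}^{n+1}g_{\alpha}=V^{n+1}g_{\alpha}$, using the identity $C_a(g_{a,n+1})=I_{a^+}^{n+1}$. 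Since the monomials span a dense subspace of $\mathcal{C}([a,b])$ and of $L^p[a,b]$ for $p<\infty$ (Weierstrass) and both operators are bounded, they coincide, so $J_{a^+}^{\alpha}=C_a(g_{\alpha})$; the case $p=\infty$ needs a separate approximation, which I would flag as a minor technicality. Feeding this into the Index Law and using that $C_a$ is injective (if $C_a(g)=0$ then $g*\mathbf{1}=Vg=0$, hence $g=0$) turns the family into a continuous convolution semigroup: $g_{\alpha}*g_{\beta}=g_{\alpha+\beta}$, with $g_n=g_{a,n}$ on the integers and $\alpha\mapsto g_{\alpha}$ continuous in $L^1[a,b]$.

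The heart of the argument, and the step I expect to be the main obstacle, is to pin this semigroup down to the Riemann--Liouville kernels $g_{a,\alpha}$, and this is exactly where Titchmarsh's theorem does the decisive work. The key lemma I would isolate is a uniqueness statement for convolution square roots: if $f,h\in L^1[a,b]$ satisfy $f*f=h*h$ and $f$ is non-null on every right neighbourhood of $a$, then $f=h$. Indeed, factor $(f-h)*(f+h)=0$; if $f\neq h$, Titchmarsh produces $\lambda,\mu>0$ with $f-h\equiv0$ on $[a,a+\lambda]$ and $f+h\equiv0$ on $[a,a+\mu]$, whence $f\equiv0$ on $[a,a+\min(\lambda,\mu)]$, contradicting the hypothesis. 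A trivial support estimate shows each $g_{\gamma}$ is non-null near $a$ (otherwise $g_{2\gamma}=g_{\gamma}*g_{\gamma}$ would vanish on a neighbourhood of $a$, which $g_{a,2\gamma}$ never does).

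With the square-root lemma available I would run a bootstrap on the agreement set $S=\{\gamma\ge 1:g_{\gamma}=g_{a,\gamma}\}$. It contains every integer; it is invariant under the shifts $\gamma\mapsto\gamma\pm1$ (as long as one stays in $[1,\infty)$), because $g_{\gamma+1}=Vg_{\gamma}$ and $V$ is injective (Titchmarsh once more); it is closed under halving, since $2\gamma\in S$ yields $g_{\gamma}*g_{\gamma}=g_{a,\gamma}*g_{a,\gamma}$ and the lemma gives $\gamma\in S$; and it is closed in $(1,\infty)$ by continuity. These closure properties force $S$ to contain every dyadic rational $\ge 1$, a dense set, so $S=[1,\infty)$ and $J_{a^+}^{\gamma}=I_{a^+}^{\gamma}$ for all $\gamma\ge 1$. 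Finally, for $0<\alpha\le 1$ I would cancel on the left: $V\circ J_{a^+}^{\alpha}=J_{a^+}^{\alpha+1}=I_{a^+}^{\alpha+1}=V\circ I_{a^+}^{\alpha}$, and the injectivity of $V$ gives $J_{a^+}^{\alpha}=I_{a^+}^{\alpha}$, which closes the argument.
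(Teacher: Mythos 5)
Your overall strategy---realising $J_{a^+}^{\alpha}$ for $\alpha>1$ as the convolution operator with kernel $g_{\alpha}=J_{a^+}^{\alpha-1}\mathbf{1}$, extracting a continuous convolution semigroup, and bootstrapping over the dyadic rationals with Titchmarsh plus continuity---is essentially the Cartwright--McMullen route (your identity $J_{a^+}^{\alpha}=C_a\left(J_{a^+}^{\alpha-1}\mathbf{1}\right)$ is their observation that $I_{a^+}^{1}\circ J_{a^+}^{1/m}=J_{a^+}^{1+1/m}$ is a convolution operator, and your final cancellation of $V$ undoes that composition). However, your key ``square-root lemma'' is false as stated, and the failure is exactly the subtlety the classical proof has to work around. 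Take $h=-f$ with $f$ non-null near $a$: then $f*f=h*h$ but $f\neq h$. The flaw in your derivation is the claim that Titchmarsh produces $\lambda,\mu$ that are \emph{both} strictly positive; the correct conclusion only guarantees $\lambda,\mu\geq 0$ with $\lambda+\mu\geq b-a$, and in the counterexample one has $\lambda=0$ (since $f-h=2f$ is non-null near $a$) while $\mu=b-a$ (since $f+h\equiv 0$). The honest conclusion of the lemma is $f=\pm h$, with a sign that could a priori depend on the index, so your set $S$ is not obviously closed under halving and the bootstrap stalls.

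The gap is repairable, and the repair is precisely the step the paper's sketch dwells on (ruling out the root of unity $-1$). Run the bootstrap instead on $S'=\{\gamma:\, g_{\gamma}=\pm g_{a,\gamma}\}$, which \emph{is} closed under halving by the corrected lemma and therefore contains all dyadic rationals in range; then eliminate the sign either by the Index Law, writing $g_{\gamma}=g_{\gamma/2}*g_{\gamma/2}=\left(\pm g_{a,\gamma/2}\right)*\left(\pm g_{a,\gamma/2}\right)=g_{a,\gamma}$ (this is the composition $J_{a^+}^{p/q}=J_{a^+}^{p/2q}\circ J_{a^+}^{p/2q}$ invoked in the paper), or by continuity of $\gamma\mapsto g_{\gamma}$ together with $g_{n}=g_{a,n}$ at the integers, which forces the sign to be constantly $+$. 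With that insertion the remaining steps of your argument (commutation with $V$, density of the monomials for $p<\infty$, injectivity of $V$ to descend to $0<\alpha\leq 1$) go through.
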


Although we will not reproduce the proof presented in \cite{CarMc}, we sketch the main idea. Essentially, the authors split the proof into two steps.

The first step is to show that the result holds when $J_{a^+}^{\alpha}$ is assumed to be a convolution operator. More concretely, the idea is to show that it is enough to study $J_{a^+}^{1/m}$ for $m \in \mathbb{Z}^+$. The main reason is that the Index Law and the continuity with respect to the index allow us to describe $J_{a^+}^{\alpha}$ for $\alpha >0$, provided that we know $J_{a^+}^{1/m}$. When $J_{a^+}^{1/m}$ is assumed to be a convolution operator, it is possible to use appropriated tools from convolution theory, namely Titchmarsh Theorem, to conclude the uniqueness up to the product with a $m$-th root of unity, that is, \[J_{a^+}^{\frac{1}{m}}=e^{\frac{2\pi k i}{m}}I_{a^+}^{\frac{1}{m}}, \textnormal{ where } k \in \{0,1,2,\dots,m-1\}.\] The special treatment for the real case uses that the only possible $m$-roots of unity are $1$ or $-1$, so the Index Law implies \[J_{a^+}^{\frac{p}{q}}=\pm I_{a^+}^{\frac{p}{q}}, \textnormal{ where } p/q \in \mathbb{Q},\] where the ``$-$'' sign could depend, in principle, of each $p/q$. However, only the ``$+$'' sign is possible for each $p/q$, due to the Index Law applied to the composition $J_{a^+}^{p/q}=J_{a^+}^{(p/2q)} \circ J_{a^+}^{(p/2q)}.$ Finally, continuity forces to $J_{a^+}^{\alpha}=I_{a^+}^{\alpha}$ for any $\alpha>0.$

The second step is to show that the result holds when $J_{a^+}^{\alpha}$ is not necessarily a convolution operator. At first, one uses the Index Law and the interpolation property in order to prove that $J_{a^+}^{1/m}$ commutes with any convolution operator with polynomial kernel. Indeed, the continuity of $C_a$ and the density of polynomials in $L^p[a,b]$ imply that it commutes with any convolution operator. The previous fact is crucial to deduce that $I_{a^+}^1 \circ J_{a^+}^{1/m}$ is always a convolution operator, although $J_{a^+}^{1/m}$ was not. Finally, one mimics the discussion developed during the first step, now for the convolution operator $I_{a^+}^1 \circ J_{a^+}^{1/m}$, to conclude the uniqueness for $I_{a^+}^1 \circ J_{a^+}^{1/m}$ up to the product with a $m$-root of unity. Thus, we arrive to a similar situation to the previous one \[I_{a^+}^1 \circ J_{a^+}^{\frac{1}{m}}=e^{\frac{2\pi k i}{m}}I_{a^+}^{1+\frac{1}{m}}, \textnormal{ where } k \in \{0,1,2,\dots,m-1\}.\] Finally, the injectivity of $I_{a^+}^1$ implies \[J_{a^+}^{\frac{1}{m}}=e^{\frac{2\pi k i}{m}}I_{a^+}^{\frac{1}{m}}, \textnormal{ where } k \in \{0,1,2,\dots,m-1\},\] and we can end the argument as in the previous step.

\section{The Stieltjes case}

It is a reasonable question if we can give a similar result to the previous one for the case of the Stieltjes integral operator. We would want to ensure that there is only one continuous interpolation for it such that the Index Law holds. The answer is positive when the integrator is given by a function $h \in \mathcal{C}^1[a,b]$ such that $h'(t) > 0$ for any $t \in [a,b]$. Furthermore, we can give an explicit construction of the interpolation in this case, which is the ``Riemann-Liouville fractional integral with respect to the function $h$''. Instead of developing a technical proof for the result, it will be deduced as a corollary of the Cartwright-McMullen Theorem after suitable remarks.

\begin{Theorem} \label{Theo}
There is only one family of operators $(J_{h,\, a^+}^{\alpha})_{\alpha > 0}$ on $L^p[a,b]$ satisfying the following conditions:
\begin{enumerate}
\item The operator of order $1$ is the usual integral. That is, $J_{h, \, a^+}^1=I_{h, \, a^+}^1$. (Interpolation property)
\item The Index Law holds. That is, $J_{h, \, a^+}^{\alpha} \circ J_{h, \, a^+}^{\beta}=J_{h, \, a^+}^{\alpha+\beta}$ for all $\alpha,\beta > 0$. (Index Law)
\item The family is continuous with respect to the parameter. That is, the following map $\textnormal{Ind}_a:\mathbb{R}^+ \longrightarrow \mathcal{B} \left(L^p[a,b]\right)$ given by $\textnormal{Ind}_{h, \, a}(\alpha) = J_{h, \, a^+}^{\alpha}$ is continuous, where the codomain has the norm topology. (Continuity Property)
\end{enumerate}
The family is precisely given by the ``Riemann-Liouville fractional integral with respect to the function $h$''.
\end{Theorem}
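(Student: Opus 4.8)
The plan is to reduce the statement to Theorem~\ref{TheoO} by conjugating with the substitution operator induced by $h$, rather than to redo the convolution-theoretic argument. Since $h \in \mathcal{C}^1[a,b]$ is strictly increasing (because $h'>0$), it is a bijection from $[a,b]$ onto $[c,d]:=[h(a),h(b)]$ whose inverse $h^{-1}$ is again $\mathcal{C}^1$. First I would introduce the composition operator $\Phi_h : L^p[c,d] \to L^p[a,b]$ given by $(\Phi_h g)(t) := g(h(t))$, with inverse $(\Phi_h^{-1}f)(u) := f(h^{-1}(u))$. Because $h'$ is continuous and strictly positive on the compact interval $[a,b]$, there exist constants $0 < m \le M$ with $m \le h'(t) \le M$ for all $t$, and the change of variables $u=h(t)$ then shows that both $\Phi_h$ and $\Phi_h^{-1}$ are bounded; hence $\Phi_h$ is an isomorphism of the relevant Banach spaces.

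The key computation is the conjugacy identity $I_{h,\,a^+}^{\alpha} = \Phi_h \circ I_{c^+}^{\alpha} \circ \Phi_h^{-1}$ for every $\alpha>0$, where $I_{c^+}^{\alpha}$ denotes the ordinary Riemann-Liouville fractional integral of Definition~\ref{Lio} on $L^p[c,d]$ with base point $c$. To verify it I would apply the right-hand side to an arbitrary $f \in L^p[a,b]$, evaluate $I_{c^+}^{\alpha}(\Phi_h^{-1}f)$ at the point $h(t)$, and perform the substitution $u = h(s)$ inside the resulting integral; the factor $h'(s)\,ds$ and the kernel $(h(t)-h(s))^{\alpha-1}$ then appear automatically, reproducing exactly the definition of $I_{h,\,a^+}^{\alpha}$. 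Specialized to $\alpha=1$, this identity also records that the Stieltjes operator $I_{h,\,a^+}^{1}$ is conjugate to the ordinary integral $I_{c^+}^{1}$.

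With this dictionary in hand, given any family $(J_{h,\,a^+}^{\alpha})_{\alpha>0}$ satisfying the three hypotheses, I would set $J_{c^+}^{\alpha} := \Phi_h^{-1} \circ J_{h,\,a^+}^{\alpha} \circ \Phi_h$ on $L^p[c,d]$ and check that this transported family meets the three hypotheses of Theorem~\ref{TheoO} with base point $c$. The interpolation property $J_{c^+}^{1}=I_{c^+}^{1}$ follows from the hypothesis $J_{h,\,a^+}^{1}=I_{h,\,a^+}^{1}$ combined with the conjugacy identity at $\alpha=1$; the Index Law is preserved because conjugation by a fixed isomorphism is multiplicative, so that $J_{c^+}^{\alpha}\circ J_{c^+}^{\beta}=\Phi_h^{-1}(J_{h,\,a^+}^{\alpha}\circ J_{h,\,a^+}^{\beta})\Phi_h=J_{c^+}^{\alpha+\beta}$; and continuity of $\alpha \mapsto J_{c^+}^{\alpha}$ follows by composing the continuous map $\alpha \mapsto J_{h,\,a^+}^{\alpha}$ with the conjugation operation, which is itself continuous by two applications of Remark~\ref{Rema}. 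Theorem~\ref{TheoO} then forces $J_{c^+}^{\alpha}=I_{c^+}^{\alpha}$, and conjugating back with $\Phi_h$ gives $J_{h,\,a^+}^{\alpha}=I_{h,\,a^+}^{\alpha}$. Existence is automatic, since the same conjugacy identity together with Proposition~\ref{Prop} applied on $[c,d]$ shows that the family $I_{h,\,a^+}^{\alpha}$ indeed satisfies all three conditions.

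I expect the main obstacle to be the careful justification that $\Phi_h$ is a genuine isomorphism of Banach spaces and that conjugation by it respects the norm topology appearing in the continuity axiom; this is precisely where the hypotheses $h \in \mathcal{C}^1[a,b]$ and $h'>0$ on a compact interval are indispensable, since they provide the two-sided bound $m \le h' \le M$ that makes both $\Phi_h$ and $\Phi_h^{-1}$ bounded. Once the conjugacy identity and the boundedness of $\Phi_h$ are in place, everything else is bookkeeping: the three axioms transfer mechanically through the conjugation, and the statement becomes a direct corollary of the Cartwright-McMullen Theorem.
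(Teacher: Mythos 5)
Your proposal is correct and follows essentially the same route as the paper: the paper also conjugates by the substitution operator $R_h(f)=f\circ h$ (your $\Phi_h$), establishes the identity $I_{h,\,a^+}^{1}=R_h\circ I_{h(a)^+}^{1}\circ R_h^{-1}$, transfers the three axioms through the conjugation using Lemma~\ref{Lema} and Remark~\ref{Rema}, and invokes Theorem~\ref{TheoO}. The only cosmetic difference is that you verify the conjugacy identity directly for every $\alpha>0$, whereas the paper checks it only at $\alpha=1$ and obtains the general case as output of the uniqueness argument.
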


\begin{proof}
Consider the operator $R_{h}: L^p[h(a),h(b)] \longrightarrow L^p[a,b]$ given by $R_{h}(f)=f \circ h$. Since $h$ is continuously differentiable and $h'(t) > 0$ when $t \in [a,b]$, it is a consequence of the Change of Variables Theorem that $R_h$ is well-defined, meaning that $f \circ h \in L^p[a,b]$ when $f \in L^p[h(a),h(b)]$. Although $h$ is not necessarily linear, it is straightforward to check that $R_h$ is an invertible linear operator, where $R_h^{-1}=R_{h^{-1}}$. To see that the operator $R_h$ is continuous, we recall that, for a function $f \in L^p[h(a),h(b)]$, we have $$\Vert f \Vert_{L^p[h(a),h(b)]} = \int_{h(a)}^{h(b)} \left \vert f(t)\right \vert \, dt= \int_a^b \left \vert f(h(t))\right \vert \cdot \left \vert h'(t) \right \vert \, dt \geq m \cdot \int_a^b \left \vert f(h(t))\right \vert \, dt,$$ where $m = \min\{ \vert h'(t) \vert \in \mathbb{R}^+: t \in [a,b] \} >0$ exists since $\vert h' \vert$ is continuous on the compact interval $[a,b]$ and does not vanish. Thus, $$\Vert R_h(f) \Vert_{L^p[a,b]}= \int_a^b \vert f(h(t)) \vert \, ds \leq \frac{1}{m} \, \Vert f \Vert_{L^p[h(a),h(b)]}$$ and we have proved that $R_h$ is continuous.

The previous properties concerning $R_h$ are of our interest because
\begin{equation} \label{Eq4}
I_{h,\,a^+}^1 = R_h \circ I_{h(a)^+}^1 \circ R^{-1}_h.
\end{equation} This claim follows by direct calculation, since
\begin{equation*}
\left(R_h \circ I_{h(a)^+}^1 \circ R^{-1}_h  f\right)(t)=\left(R_h \circ I_{h(a)^+}^1 (f \circ h^{-1}) \right)(t)=R_h \left(\int_{h(a)}^t f\left(h^{-1}(s)\right) \, ds \right)
\end{equation*}
and the application of $R_h$ and the Change of Variables Theorem allow to rewrite the previous right hand side as
\begin{equation*}
\int_{h(a)}^{h(t)} f\left(h^{-1}(s)\right) \, ds=\int_{a}^{t} f(s) \cdot h'(s) \, ds = \left(I_{h,\,a^+}^1 f\right)(t).
\end{equation*}

In fact, it is immediate to show by iterated composition that $I_{h,\,a^+}^n = R_h \circ I_{h(a)^+}^n \circ R^{-1}_h$ for any positive integer $n$. Now, our intuition tells us that if $J_{h(a)^+}^{\alpha}$ is a ``nice interpolation'' for $I_{h(a)^+}^{\alpha}$, then the definition $R_h \circ J_{h(a)^+}^{\alpha} \circ R^{-1}_h$ should be a ``nice interpolation'' for $I_{h,\,a^+}^{\alpha}.$ Conversely, a choice $J_{h, \, a^+}^{\alpha}$ fulfilling the hypotheses in Theorem \ref{Theo}, should imply that
\begin{equation} \label{Eq3}
K_{h(a)^+}^{\alpha}:=R_h^{-1} \circ J_{h, \, a^+}^{\alpha} \circ R_h.
\end{equation}
is under the hypotheses of the Cartwright-McMullen Theorem \ref{TheoO}. This intuition can be confirmed after the following three remarks. Before doing this, note that the continuity of $J_{h, \, a^+}^{\alpha}$, $R_h$ and $R_h^{-1}$ imply that $K_{h(a)^+}^{\alpha} \in \mathcal{B}\left(L^p[h(a),h(b)]\right)$. 
\begin{enumerate}
\item If $J_{h, \, a^+}^{\alpha}=I_{h, \, a^+}^{\alpha}$, then $K_{h(a)^+}^{1}=I_{h(a)^+}^1$. This is a consequence of Equations (\ref{Eq4}), (\ref{Eq3}), and the Interpolation Property (first hypothesis in Theorem \ref{Theo}).
\item If $J_{h, \, a^+}^{\alpha} \circ J_{h, \, a^+}^{\beta} = J_{h, \, a^+}^{\alpha+\beta}$, then $K_{h(a)^+}^{\alpha} \circ K_{h(a)^+}^{\beta} = K_{h(a)^+}^{\alpha+\beta}$. This is a consequence of Equation (\ref{Eq3}) and the Index Law (second hypothesis in Theorem \ref{Theo}).
\item If the map $\textnormal{Ind}_{h, \, a}(\alpha)=J_{h, \, a^+}^{\alpha}$ is continuous, then $\textnormal{Ind}_{h(a)}(\alpha)=K_{h(a)^+}^{\alpha}$ is continuous. This part is not completely straightforward, since it is a consequence of (\ref{Eq3}) and the Continuity Property (third hypothesis in Theorem \ref{Theo}), but the continuity of the composition operator described in Lemma \ref{Lema} and Remark \ref{Rema} also plays a key role. We just see that we can write $\textnormal{Ind}_{h(a)}(\alpha)$ as a composition of continuous operators \[\textnormal{Ind}_{h(a)}(\alpha)=\textnormal{Comp}_{ \left(\cdot, R_h^{-1} \right)} \circ \textnormal{Comp}_{\left(R_h,\cdot\right)} \circ \textnormal{Ind}_{h, \, a}(\alpha)\]
\end{enumerate}
Therefore, since $R_h$ and $R_h^{-1}$ are bijective, two different choices for $J_{h, \, a^+}^{\alpha}$ would induce two different possibilities for $J_{h(a)^+}^{\alpha}$ in the Cartwright-McMullen Theorem. Since this is not possible, there is also a unique choice for $J_{h, \, a^+}^{\alpha}$ that is induced from the unique choice for $J_{h(a)^+}^{\alpha}$, which is $J_{h(a)^+}^{\alpha}=I_{h(a)^+}^{\alpha}$. Consequently, the unique possibility for $J_{h, \, a^+}^{\alpha}$ will be given by \[I_{h, \, a^+}^{\alpha}= R_h \circ I_{h(a)^+}^{\alpha} \circ R_h^{-1}.\]
\end{proof}

%\ack % or \acks
% Put acknowledgements here

% alteratively, bibliographies prepared with BibTeX can be included by
% means of the following commands
%\bibliographystyle{srtnumbered}
%\bibliography{mybib}

\end{document}